\newtheorem{theorem}{Theorem}[section]
\newtheorem{proposition}{Proposition}[section]
\newtheorem{corollary}{Corollary}[section]
\theoremstyle{definition}
\newtheorem{definition}{Definition}[section]
\newtheorem{example}{Example}[section]
\begin{document}

\title{SURFACES GIVEN WITH THE MONGE PATCH IN $\mathbb{E}^{4}$}
\author{Bet\"{u}l Bulca and Kadri Arslan \\
Department of Mathematics, Uluda\u{g} University ,16059 Bursa, TURKEY \\
E-mails: bbulca@uludag.edu.tr, arslan@uludag.edu.tr}
\maketitle

\begin{abstract}
A depth surface of $\mathbb{E}^{3}$ is a range image observed from a single
view can be represented by a digital graph (Monge patch) surface . That is,
a depth or range value at a point $(u,v)$ is given by a single valued
function $z=f(u,v)$. In the present study we consider the surfaces in
Euclidean 4-space $\mathbb{E}^{4}$ given with a Monge patch $%
z=f(u,v),w=g(u,v)$. We investigated the curvature properties of these
surfaces. We also give some special examples of these surfaces which are
first defined by Yu. Aminov. Finally, we proved that every Aminov surface is
a non-trivial Chen surface.
\end{abstract}

\section{\textbf{Introduction}}

\footnote{%
2010 \textit{Mathematics Subject Classification}. 53C40, 53C42
\par
\textit{Key words and phrases}: Monge patch, Translation surface, Chen
surface} In recent years there has been a tremendous increase in computer
vision research using range images (or depth maps) as sensor input data \cite%
{BJ}. The most attractive feature of range images is the explicitness of the
surface information. Many industrial and navigational robotic tasks will be
more easily accomplished if such explicit depth information can be
efficiently obtained and interpreted. Classical differential geometry
provides a complete local description of smooth surfaces \cite{Ca}, \cite{Li}%
. The first and second fundamental forms of surfaces provide a set of
differential-geometric shape descriptors that capture domain-independent
surface information. Gaussian curvature is an intrinsic surface property
which refers to an isometric invariant of a surface \cite{Ca}. Both Gaussian
and mean curvatures have the attractive characteristics of translational and
rotational invariance. A depth surface is a range image observed from a
single view can be represented by a digital graph (Monge patch) surface.
That is, a depth or range value at a point $(u,v)$ is given by a single
valued function $z(u,v)$.

One interesting class of surfaces in $\mathbb{E}^{3}$ is that of translation
surfaces, which can be parameterized, locally, as $z(u,v)=f(u)+g(v)$, where $%
f$ and $g$ are smooth functions. From the definition, it is clear that
translation surfaces are double curved surfaces. Therefore, translation
surfaces are made up of quadrilateral, that is, four sided, facets. Because
of this property, translation surfaces are used in architecture to design
and construct free-form glass roofing structures, see \cite{GSC}. Scherk's
surface, obtained by H. Scherk in 1835, is the only non flat minimal
surface, that can be represented as a translation surface \cite{Sc}.
Translation surfaces have been investigated from the various viewpoints by
many differential geometers. L. Verstraelen, J. Walrave and S. Yaprak have
investigated minimal translation surfaces in n-dimensional Euclidean spaces 
\cite{VWY}.

In \cite{Ch} B.Y. Chen defined the allied vector field $a(v)$ of a normal
vector field $v$. In particular, the allied mean curvature vector field is
orthogonal to $H$. Further, B.Y. Chen defined the $\mathcal{A}$-surface to
be the surfaces for which $a(H)$ vanishes identically. Such surfaces are
also called Chen surfaces \cite{GVV1}. The class of Chen surfaces contains
all minimal and pseudo-umbilical surfaces, and also all surfaces for which $%
dimN_{1}\leq 1,$ in particular all hypersurfaces. These Chen surfaces are
said to be Trivial $\mathcal{A}$-surfaces \cite{GVV2}. In \cite{Ro}, B.
Rouxel considered ruled Chen surfaces in $\mathbb{E}^{n}.$ For more details,
see also, \cite{Du} and \cite{IAG}.

This paper is organized as follows: Section $2$ gives some basic concepts of
the surfaces in $\mathbb{E}^{4}$. Section $3$ tells about the surfaces given
with a Monge patch in $\mathbb{E}^{4}$. Further this section provides some
basic properties of surfaces in $\mathbb{E}^{4}$ and the structure of their
curvatures. In the third section we consider Aminov surfaces given with the
Monge patch in $\mathbb{E}^{4}$. We also present some examples of these
surfaces. We obtain few new interesting results. Namely, we obtain some
equations on $r(u),$ when on $M$ the equation $K+K_{N}=0$ has place.\ Then
we obtain the condition for the case $M$ is a Wintgen ideal surface. We
remark that on the Wintgen ideal surfaces the equation $K+K_{N}=\left\Vert
H\right\Vert ^{2}$ has place. In the final section we obtain an important
equation on the coefficents of the second quadratic form for Chen surfaces,
when it is given at arbitrary parametrization. We also proved that every
Aminov surfaces in $\mathbb{E}^{4}$ are non-trivial Chen surfaces.\ 

\section{\textbf{Basic Concepts}}

Let $M$ be a smooth surface in $\mathbb{E}^{4}$ given with the patch $X(u,v)$
: $(u,v)\in D\subset \mathbb{E}^{2}$. The tangent space to $M$ at an
arbitrary point $p=X(u,v)$ of $M$ span $\left\{ X_{u},X_{v}\right\} $. In
the chart $(u,v)$ the coefficients of the first fundamental form of $M$ are
given by 
\begin{equation}
E=\left\langle X_{u},X_{u}\right\rangle ,F=\left\langle
X_{u},X_{v}\right\rangle ,G=\left\langle X_{v},X_{v}\right\rangle ,
\label{A1}
\end{equation}%
where $\left\langle ,\right\rangle $ is the Euclidean inner product. We
assume that $W^{2}=EG-F^{2}\neq 0,$ i.e. the surface patch $X(u,v)$ is
regular.\ For each $p\in M$, consider the decomposition $T_{p}\mathbb{E}%
^{4}=T_{p}M\oplus T_{p}^{\perp }M$ where $T_{p}^{\perp }M$ is the orthogonal
component of $T_{p}M$ in $\mathbb{E}^{4}$. Let $\overset{\sim }{\nabla }$ be
the Riemannian connection of $\mathbb{E}^{4}$. Given any local vector fields 
$X_{i},$ $X_{j}$ tangent to $M$.

Let $\chi (M)$ and $\chi ^{\perp }(M)$ be the space of the smooth vector
fields tangent to $M$ and the space of the smooth vector fields normal to $M$%
, respectively. Consider the second fundamental map: $h:\chi (M)\times \chi
(M)\rightarrow \chi ^{\perp }(M);$%
\begin{equation}
h(X_{i},X_{j})=\widetilde{\nabla }_{X_{i}}X_{j}-\nabla _{X_{i}}X_{j}\text{\ }%
1\leq i,j\leq 2.  \label{A3}
\end{equation}%
where $\widetilde{\nabla }$ is the induced. This map is well-defined,
symmetric and bilinear.

For any arbitrary orthonormal normal frame field $\left\{
N_{1},N_{2}\right\} $ of $M$, recall the shape operator $A:\chi ^{\perp
}(M)\times \chi (M)\rightarrow \chi (M);$%
\begin{equation}
A_{N_{i}}X_{i}=-(\widetilde{\nabla }_{X_{i}}N_{i})^{T},\text{ \ \ }X_{i}\in
\chi (M).  \label{A4}
\end{equation}%
This operator is bilinear, self-adjoint and satisfies the following equation:%
\begin{equation}
\left\langle A_{N_{k}}X_{i},X_{j}\right\rangle =\left\langle
h(X_{i},X_{j}),N_{k}\right\rangle =c_{ij}^{k}\text{, }1\leq i,j,k\leq 2.
\label{A5}
\end{equation}

The equation (\ref{A3}) is called Gaussian formula, and%
\begin{equation}
h(X_{i},X_{j})=\overset{2}{\underset{k=1}{\sum }}c_{ij}^{k}N_{k},\ \ \ \ \
1\leq i,j\leq 2  \label{A6}
\end{equation}%
where $c_{ij}^{k}$ are the coefficients of the second fundamental form.

Further, the \textit{Gaussian curvature} and \textit{Gaussian torsion} of a
regular patch $X(u,v)$ are given by%
\begin{equation}
K=\frac{1}{W^{2}}\sum%
\limits_{k=1}^{2}(c_{11}^{k}c_{22}^{k}-(c_{12}^{k})^{2}),  \label{A7}
\end{equation}%
and 
\begin{equation}
K_{N}=\frac{1}{W^{2}}\left( E\left(
c_{12}^{1}c_{22}^{2}-c_{12}^{2}c_{22}^{1}\right) -F\left(
c_{11}^{1}c_{22}^{1}-c_{11}^{2}c_{22}^{1}\right) +G\left(
c_{11}^{1}c_{12}^{2}-c_{11}^{2}c_{12}^{1}\right) \right) ,  \label{A8}
\end{equation}%
respectively.

Further, the mean curvature vector of a regular patch $X(u,v)$ is defined by%
\begin{equation}
\overrightarrow{H}=\frac{1}{2W^{2}}%
\sum_{k=1}^{2}(c_{11}^{k}G+c_{22}^{k}E-2c_{12}^{k}F)N_{k}.  \label{A9}
\end{equation}

Recall that a surface $M$ is said to be \textit{minimal} if its mean
curvature vector vanishes identically \cite{Ch}.

The surface patch $X(u,v)$ is called \textit{pseudo-umbilical} if the shape
operator with respect to $H$ is proportional to the identity (see, \cite{Ch}%
).

\section{Surfaces Given with a Monge Patch in $\mathbb{E}^{4}$}

2-dimensional surfaces \ in $\mathbb{E}^{4}$ are interesting object for
investigation of geometers. Here we have some difficult problems which wait
its solutions. For example, it is unknown does there exist an isometric
regular immersion of the whole Lobachevsky plane into $\mathbb{E}^{4}$. \
Hence the investigation of various classes of surfaces in $\mathbb{E}^{4}$
with point of view of influence of the principal invariants - Gauss
curvature $K$, Gauss torsion $K_{N}$ and the vector of mean curvature $%
\overrightarrow{H}$ on the behavior of surfaces is an actual problem.

In the considering work we use the representation of surfaces in the
explicit form 
\begin{equation}
r(u,v)=(u,v,f(u,v),g(u,v)),  \label{C1}
\end{equation}%
where $f$ and $g$ are some smooth functions.{\Large \ }The parametrization (%
\ref{C1}) is called \textit{Monge patch in }$\mathbb{E}^{4}.$

First we obtain the following result.

\begin{theorem}
Let $M$ be a smooth surface given with the Monge patch (\ref{C1}). Then the
mean curvature vector of $M$ becomes%
\begin{eqnarray}
\overrightarrow{H} &=&\frac{1}{2\sqrt{A}W^{2}}(Gf_{uu}-2Ff_{uv}+Ef_{vv})N_{1}
\label{C2} \\
&&+\frac{1}{2\sqrt{A}W^{3}}\left(
G(-Bf_{uu}+Ag_{uu})-2F(-Bf_{uv}+Ag_{uv})+E(-Bf_{vv}+Ag_{vv})\right) N_{2} 
\notag
\end{eqnarray}%
where 
\begin{eqnarray}
A &=&1+(f_{u})^{2}+(f_{v})^{2},  \notag \\
B &=&f_{u}g_{u}+f_{v}g_{v}\text{ },  \label{C3} \\
C &=&1+(g_{u})^{2}+(g_{v})^{2}  \notag
\end{eqnarray}%
such that $EG-F^{2}=AC-B^{2}.$
\end{theorem}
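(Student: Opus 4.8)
The plan is to specialize the general mean curvature formula (\ref{A9}) to the Monge patch (\ref{C1}) by explicitly computing the first and second fundamental forms together with an adapted orthonormal normal frame. First I would differentiate $r(u,v)=(u,v,f,g)$ to obtain the tangent fields $r_{u}=(1,0,f_{u},g_{u})$ and $r_{v}=(0,1,f_{v},g_{v})$, whence by (\ref{A1}) the first fundamental form is $E=1+f_{u}^{2}+g_{u}^{2}$, $F=f_{u}f_{v}+g_{u}g_{v}$, $G=1+f_{v}^{2}+g_{v}^{2}$. A direct expansion then shows $W^{2}=EG-F^{2}=AC-B^{2}$ with $A,B,C$ as in (\ref{C3}); this identity is the bookkeeping fact that makes the later normalizations close up. The second-order derivatives live purely in the last two slots: $r_{uu}=(0,0,f_{uu},g_{uu})$, $r_{uv}=(0,0,f_{uv},g_{uv})$, $r_{vv}=(0,0,f_{vv},g_{vv})$.

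Next I would build the orthonormal normal frame. The vector $(-f_{u},-f_{v},1,0)$ is orthogonal to both $r_{u}$ and $r_{v}$ and has length $\sqrt{A}$, so I set $N_{1}=\frac{1}{\sqrt{A}}(-f_{u},-f_{v},1,0)$. For the second normal I start from $(-g_{u},-g_{v},0,1)$, which is again tangent-orthogonal, and apply Gram--Schmidt against $N_{1}$: since $\langle (-g_{u},-g_{v},0,1),N_{1}\rangle =B/\sqrt{A}$, the corrected vector has squared length $C-B^{2}/A=W^{2}/A$, giving $N_{2}=\frac{1}{\sqrt{A}\,W}(Bf_{u}-Ag_{u},\,Bf_{v}-Ag_{v},\,-B,\,A)$.

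With the frame in hand the coefficients follow from $c_{ij}^{k}=\langle r_{ij},N_{k}\rangle$, recalling that $h(X_{i},X_{j})$ is just the normal part of $r_{ij}$, so its $N_{k}$-component is this inner product. Because $r_{uu},r_{uv},r_{vv}$ have vanishing first two entries, only the last two components of each $N_{k}$ contribute, and I obtain $c_{ij}^{1}=f_{ij}/\sqrt{A}$ together with $c_{ij}^{2}=(-Bf_{ij}+Ag_{ij})/(\sqrt{A}\,W)$ for $(ij)\in\{uu,uv,vv\}$. Substituting these into (\ref{A9}) and collecting the combination $c_{11}^{k}G+c_{22}^{k}E-2c_{12}^{k}F$ for each $k$ yields exactly (\ref{C2}): the factor $1/\sqrt{A}$ produces the $N_{1}$-term, while the extra $1/W$ carried by $N_{2}$ produces the $1/W^{3}$ in the $N_{2}$-term.

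The only genuinely delicate step is the Gram--Schmidt construction of $N_{2}$ and the attendant check that its normalizing length is $W/\sqrt{A}$; everything else is a routine substitution. I expect the main care to go into confirming the algebraic identity $EG-F^{2}=AC-B^{2}$, since the clean form of (\ref{C2}) hinges on it.
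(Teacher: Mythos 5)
Your proposal is correct and follows essentially the same route as the paper: compute the tangent and second derivative vectors, the first fundamental form, the orthonormal normal frame $N_{1},N_{2}$, the coefficients $c_{ij}^{k}$, and substitute into (\ref{A9}). The only difference is that you derive $N_{2}$ explicitly by Gram--Schmidt (a welcome bit of extra detail), whereas the paper simply states the frame.
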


\begin{proof}
The tangent space of $M$ is spanned by the vector fields%
\begin{eqnarray*}
\frac{\partial X}{\partial u} &=&(1,0,f_{u},g_{u}), \\
\frac{\partial X}{\partial v} &=&(0,1,f_{v},g_{v}).
\end{eqnarray*}%
Hence, the coefficients of the first fundamental form of the surface are%
\begin{eqnarray}
E &=&\text{ }\left\langle X_{u}(u,v),X_{u}(u,v)\right\rangle \text{ }%
=1+(f_{u})^{2}+(g_{u})^{2},  \notag \\
F &=&\text{ }\left\langle X_{u}(u,v),X_{v}(u,v)\right\rangle \text{ }%
=f_{u}f_{v}+g_{u}g_{v},  \label{C4} \\
G &=&\text{ }\left\langle X_{v}(u,v),X_{v}(u,v)\right\rangle \text{ }%
=1+(f_{v})^{2}+(g_{v})^{2},  \notag
\end{eqnarray}%
where $\left\langle ,\right\rangle $ is the standard scalar product in $%
\mathbb{R}^{4}.$

The second partial derivatives of $X(u,v)$ are expressed as follows%
\begin{eqnarray}
X_{uu}(u,v) &=&(0,0,f_{uu},g_{uu}),  \notag \\
X_{uv}(u,v) &=&(0,0,f_{uv},g_{uv}),  \label{C5} \\
X_{vv}(u,v) &=&(0,0,f_{vv},g_{vv}).  \notag
\end{eqnarray}

Further, the normal space of $M$ is spanned by the vector fields%
\begin{eqnarray}
N_{1} &=&\frac{1}{\sqrt{A}}(-f_{u},-f_{v},1,0)  \label{C6} \\
N_{2} &=&\frac{1}{W\sqrt{A}}(Bf_{u}-Ag_{u},Bf_{v}-Ag_{v},-B,A).
\notag
\end{eqnarray}

Using (\ref{A5}), (\ref{C5}) and (\ref{C6}) we can calculate the
coefficients of the second fundamental form $h$ are as follows:%
\begin{eqnarray}
c_{11}^{1} &=&\left\langle X_{uu}(u,v),N_{1}\right\rangle =\frac{f_{uu}}{%
\sqrt{A}},\text{ \ \ \ }  \notag \\
\text{\ }c_{12}^{1} &=&\left\langle X_{uv}(u,v),N_{1}\right\rangle =\frac{%
f_{uv}}{\sqrt{A}},  \notag \\
\text{ }c_{22}^{1} &=&\left\langle X_{vv}(u,v),N_{1}\right\rangle =\frac{%
f_{vv}}{\sqrt{A}},\text{ }  \label{C7} \\
c_{11}^{2} &=&\left\langle X_{uu}(u,v),N_{2}\right\rangle =\frac{%
-Bf_{uu}+Ag_{uu}}{W\sqrt{A}},  \notag \\
\text{ }c_{12}^{2} &=&\left\langle X_{uv}(u,v),N_{2}\right\rangle =\frac{%
-Bf_{uv}+Ag_{uv}}{W\sqrt{A}},\text{ }  \notag \\
c_{22}^{2} &=&\left\langle X_{vv}(u,v),N_{2}\right\rangle =\frac{%
-Bf_{vv}+Ag_{vv}}{W\sqrt{A}}.  \notag
\end{eqnarray}

Further, substituting (\ref{C4}) and (\ref{C7}) into (\ref{A9}) we get (\ref%
{C2}). This completes the proof of the theorem.
\end{proof}

In \cite{Am} Yu. Aminov proved the following result.

\begin{theorem}
\cite{Am} Let $M$ be a smooth surface given with the Monge patch (\ref{C1}).
Then the Gaussian curvature $K$ and Gaussian torsion $K_{N}$ of $M$ become%
\begin{equation}
K=\frac{%
C(f_{uu}f_{vv}-f_{uv}^{2})-B(f_{uu}g_{vv}+g_{uu}f_{vv}-2f_{uv}g_{uv})+A(g_{uu}g_{vv}-g_{uv}^{2})%
}{W^{4}},  \label{C8}
\end{equation}%
and%
\begin{equation}
K_{N}=\frac{%
E(f_{uv}g_{vv}-g_{uv}f_{vv})-F(f_{uu}g_{vv}-g_{uu}f_{vv})+G(f_{uu}g_{uv}-g_{uu}f_{uv})%
}{W^{4}}  \label{C9}
\end{equation}%
respectively.
\end{theorem}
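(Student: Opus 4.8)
The plan is to derive both formulas by direct substitution into the definitions (\ref{A7}) and (\ref{A8}), since every ingredient is already in hand from the proof of the preceding theorem: the first fundamental form coefficients (\ref{C4}), the orthonormal normal frame (\ref{C6}), and above all the second fundamental form coefficients (\ref{C7}). No new geometric construction is required; the entire content is algebraic simplification, and the only genuine ideas are two cancellations, one for each curvature.

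For the Gaussian curvature $K$, I would split the sum in (\ref{A7}) into its $N_{1}$ and $N_{2}$ contributions. The $N_{1}$ piece is immediate: from (\ref{C7}) one gets $c_{11}^{1}c_{22}^{1}-(c_{12}^{1})^{2}=(f_{uu}f_{vv}-f_{uv}^{2})/A$. The $N_{2}$ piece, $c_{11}^{2}c_{22}^{2}-(c_{12}^{2})^{2}$, requires expanding the products; after extracting the common factor $1/(AW^{2})$ it splits into three groups proportional to $B^{2}$, $AB$, and $A^{2}$, carrying respectively $f_{uu}f_{vv}-f_{uv}^{2}$, $f_{uu}g_{vv}+g_{uu}f_{vv}-2f_{uv}g_{uv}$, and $g_{uu}g_{vv}-g_{uv}^{2}$. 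Over the common denominator $AW^{4}$ the $N_{1}$ piece acquires a factor $W^{2}$ in the numerator, and the decisive step is then to merge it with the $B^{2}$ group using the identity $W^{2}=EG-F^{2}=AC-B^{2}$ stated in the theorem: since $W^{2}+B^{2}=AC$, the two pieces collapse to $AC(f_{uu}f_{vv}-f_{uv}^{2})$, a factor $A$ cancels throughout, and what remains is exactly the numerator of (\ref{C8}) over $W^{4}$. I expect the one point requiring real care to be this identity, because $E,F,G$ mix the $u$- and $v$-derivatives while $A,B,C$ separate the $f$- and $g$-components, so $E\neq A$ in general; verifying that the two quartic expressions coincide is the main---and essentially only---obstacle.

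For the Gaussian torsion $K_{N}$ the mechanism is complementary. Substituting (\ref{C7}) into (\ref{A8}), each bracketed cross-product has the form $c_{ij}^{1}c_{kl}^{2}-c_{ij}^{2}c_{kl}^{1}$, and here it is the $B$-dependent terms that cancel in pairs, leaving only the $A$-proportional parts, whose factor $A$ then cancels against the denominators of (\ref{C7}). Each cross-product thereby reduces to $1/W$ times a $2\times 2$ Jacobian of Hessian entries, for instance $c_{12}^{1}c_{22}^{2}-c_{12}^{2}c_{22}^{1}=(f_{uv}g_{vv}-g_{uv}f_{vv})/W$, while the $F$- and $G$-cross-products give $(f_{uu}g_{vv}-g_{uu}f_{vv})/W$ and $(f_{uu}g_{uv}-g_{uu}f_{uv})/W$. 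Collecting these with the weights $E$, $-F$, $G$ and absorbing the remaining powers of $W$ reproduces the numerator of (\ref{C9}). Beyond the $W^{2}=AC-B^{2}$ substitution used for $K$, all that is left here is careful bookkeeping of the signs and of the factors of $A$ and $W$.
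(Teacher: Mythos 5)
The paper itself offers no proof of this theorem --- it is quoted from Aminov's book --- so your direct verification from (\ref{C4}), (\ref{C6}), (\ref{C7}) is the natural route, and your treatment of $K$ is correct and complete: the split into the $N_{1}$ and $N_{2}$ contributions, the grouping by $B^{2}$, $AB$, $A^{2}$, the merge via $W^{2}+B^{2}=AC$ (both $EG-F^{2}$ and $AC-B^{2}$ equal $1+f_{u}^{2}+f_{v}^{2}+g_{u}^{2}+g_{v}^{2}+(f_{u}g_{v}-f_{v}g_{u})^{2}$), and the final cancellation of $A$ all check out.

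The $K_{N}$ half, however, has a genuine gap hiding in the phrase ``absorbing the remaining powers of $W$.'' Your cross-product computations are right: each bracket in (\ref{A8}) reduces to $1/W$ times a Jacobian of Hessian entries (and you have silently, correctly, repaired the superscript typo in the $F$-term of (\ref{A8}), which should read $c_{11}^{1}c_{22}^{2}-c_{11}^{2}c_{22}^{1}$). But then the prefactor $1/W^{2}$ of (\ref{A8}) delivers
\begin{equation*}
K_{N}=\frac{E(f_{uv}g_{vv}-g_{uv}f_{vv})-F(f_{uu}g_{vv}-g_{uu}f_{vv})+G(f_{uu}g_{uv}-g_{uu}f_{uv})}{W^{3}},
\end{equation*}
which is \emph{not} (\ref{C9}): you are short one power of $W$, and no amount of bookkeeping closes that gap. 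The source of the discrepancy is that (\ref{A8}) itself is misnormalized: under the rescaling $u\mapsto\lambda u$, $v\mapsto\lambda v$ the bracket scales as $\lambda^{6}$ while $W^{2}$ scales as $\lambda^{4}$, so the right-hand side of (\ref{A8}) as printed is not reparametrization-invariant and cannot equal the geometric invariant $K_{N}$; the correct coordinate formula carries $1/W^{3}$. One sees this by passing to the orthonormal frame (\ref{E1})--(\ref{E2}) and computing $K_{N}=h_{11}^{1}h_{12}^{2}-h_{11}^{2}h_{12}^{1}+h_{12}^{1}h_{22}^{2}-h_{12}^{2}h_{22}^{1}$, which yields $W^{-3}\left(E(c_{12}^{1}c_{22}^{2}-c_{12}^{2}c_{22}^{1})-F(c_{11}^{1}c_{22}^{2}-c_{11}^{2}c_{22}^{1})+G(c_{11}^{1}c_{12}^{2}-c_{11}^{2}c_{12}^{1})\right)$ after using $W^{2}+F^{2}=EG$. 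With that corrected normalization your computation does produce (\ref{C9}) with $W^{4}$ in the denominator (and this is consistent with the Aminov-surface formula (\ref{D3})); but as written, your argument proves a formula that disagrees with the asserted one by a factor of $W$, and you need to identify and fix the normalization of (\ref{A8}) rather than take it at face value.
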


\begin{proposition}
Let $M$ be a smooth surface given with the Monge patch of the form 
\begin{eqnarray}
f(u,v) &=&\phi _{u}(u,v),  \label{C10} \\
g(u,v) &=&\phi _{v}(u,v).  \notag
\end{eqnarray}%
Then the Gaussian curvature $K$ coincides with the Gaussian torsion $K_{N}$
of $M.$
\end{proposition}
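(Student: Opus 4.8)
The plan is to exploit the single structural feature that distinguishes the patch \eqref{C10} from a general Monge patch: since $f=\phi _{u}$ and $g=\phi _{v}$ arise from the same potential $\phi $, equality of mixed second partials gives the symmetry
\[
f_{v}=\phi _{uv}=\phi _{vu}=g_{u}.
\]
First I would record the higher-order consequences of this identity. Differentiating $g_{u}=f_{v}$ once more yields $g_{uu}=\phi _{uuv}=f_{uv}$ and $g_{uv}=\phi _{uvv}=f_{vv}$, so every second derivative of $g$ that occurs in the curvature formulas can be rewritten in terms of derivatives of $f$, the only genuinely new quantity being $g_{vv}=\phi _{vvv}$.

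Next I would feed $f_{v}=g_{u}$ into the coefficients of the first fundamental form \eqref{C4} and compare them with the auxiliary quantities \eqref{C3}. The two triples coincide:
\[
E=1+f_{u}^{2}+g_{u}^{2}=1+f_{u}^{2}+f_{v}^{2}=A,\qquad F=f_{u}f_{v}+g_{u}g_{v}=B,\qquad G=1+f_{v}^{2}+g_{v}^{2}=C.
\]
This is the heart of the argument: in this special case the intrinsic metric data $(E,F,G)$ literally equal the normal-frame data $(A,B,C)$ introduced in Theorem 3.1 (which incidentally re-derives the stated relation $EG-F^{2}=AC-B^{2}$).

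With these identifications in hand, the conclusion follows by substitution into the Aminov formulas \eqref{C8} and \eqref{C9}. Using $g_{uu}=f_{uv}$ and $g_{uv}=f_{vv}$, the three second-order minors in the numerator of $K$ collapse to $f_{uu}f_{vv}-f_{uv}^{2}$, $f_{uu}g_{vv}-f_{uv}f_{vv}$ and $f_{uv}g_{vv}-f_{vv}^{2}$, carried respectively by the coefficients $C$, $-B$ and $A$. Performing the same substitution in \eqref{C9} produces exactly these same three minors, now carried by $G$, $-F$ and $E$. Invoking $E=A$, $F=B$, $G=C$ makes the two numerators identical, and since both curvatures share the common denominator $W^{4}$, we conclude $K=K_{N}$.

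I do not expect a genuine obstacle here; the whole proof hinges on spotting that the common potential forces $f_{v}=g_{u}$, and hence $(E,F,G)=(A,B,C)$. The only point requiring care is the bookkeeping of which minor multiplies which coefficient when matching \eqref{C8} against \eqref{C9}; once the symmetries $g_{uu}=f_{uv}$ and $g_{uv}=f_{vv}$ are applied consistently, the term-by-term correspondence is forced.
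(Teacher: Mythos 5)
Your proposal is correct and follows essentially the same route as the paper: the paper's proof likewise observes that the common potential forces $E=A$, $F=B$, $G=C$ and then substitutes into the formulas \eqref{C8} and \eqref{C9} to conclude $K=K_N$. You merely make explicit the term-by-term matching (via $g_{uu}=f_{uv}$, $g_{uv}=f_{vv}$) that the paper leaves as "easy calculation."
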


\begin{proof}
Suppose $M$ is a smooth surface given with the Monge patch
(\ref{C1}). \
Then by the use of (\ref{C3}) with (\ref{C4}) we get%
\begin{eqnarray}
E &=&A=1+(\phi _{uu})^{2}+(\phi _{uv})^{2},  \notag \\
F &=&B=\phi _{uu}\phi _{uv}+\phi _{uv}\phi _{vv}\text{ },  \label{C11} \\
G &=&C=1+(\phi _{uv})^{2}+(\phi _{vv})^{2}  \notag
\end{eqnarray}%
Furthermore, substituting (\ref{C10}) into (\ref{C8})-(\ref{C9}) and
using partial derivatives of the functions given in the equation
(\ref{C10}) we obtain $K=K_{N}.$
\end{proof}

\begin{example}
For the surface $M$ given with the Monge patch 
\begin{eqnarray*}
f(u,v) &=&\phi _{u}(u,v)=e^{u}\cos v. \\
g(u,v) &=&\phi _{v}(u,v)=-e^{u}\sin v
\end{eqnarray*}%
the Gaussian curvature $K$ coincides with the Gaussian torsion $K_{N}$ of $M$
\cite{Am2}. $.$
\end{example}

\begin{definition}
The surface given with the parametrization (\ref{C1}) by the parametrization%
\begin{equation}
f(u,v)=f_{3}(u)+g_{3}(v),\text{ }g\text{ }(u,v)=f_{4}(u)+g_{4}(v)
\label{C12}
\end{equation}%
is called \textit{translation surface} in Euclidean 4-space $\mathbb{E}^{4}$ 
\cite{DVVZ}$.$
\end{definition}

In the case (\ref{C12}) we obtain simple expressions for $K,K_{N}$ and $%
\overrightarrow{H}$. As a consequence of Theorem 1 and Theorem 2 we get the
following results.

\begin{corollary}
Let $M$ be a translation surface given with the Monge patch (\ref{C12}).
Then the Gaussian curvature $K$ \ and Gaussian torsion $K_{N}$ of $M$ becomes%
\begin{equation*}
K=\frac{f_{3}^{\prime \prime }(u)g_{3}^{\prime \prime }(v)C-(f_{3}^{\prime
\prime }(u)g_{4}^{\prime \prime }(v)+f_{4}^{\prime \prime }(u)g_{3}^{\prime
\prime }(v))B+f_{4}^{\prime \prime }(u)g_{4}^{\prime \prime }(v)A}{W^{4}},
\end{equation*}%
and%
\begin{equation*}
K_{N}=\frac{F(f_{4}^{\prime \prime }(u)g_{3}^{\prime \prime
}(v)-f_{3}^{\prime \prime }(u)g_{4}^{\prime \prime }(v))}{W^{4}},
\end{equation*}%
respectively, where%
\begin{eqnarray*}
E &=&\text{ }1+(f_{3}^{\prime }(u))^{2}+(f_{4}^{\prime }(u))^{2}, \\
F &=&\text{ }f_{3}^{\prime }(u)g_{3}^{\prime }(v)+f_{4}^{\prime
}(u)g_{4}^{\prime }(v), \\
G &=&1+(g_{3}^{\prime }(v))^{2}+(g_{4}^{\prime }(v))^{2},
\end{eqnarray*}%
and%
\begin{eqnarray*}
A &=&\text{ }1+(f_{3}^{\prime }(u))^{2}+(g_{3}^{\prime }(v))^{2}, \\
B &=&\text{ }f_{3}^{\prime }(u)f_{4}^{\prime }(u)+g_{3}^{\prime
}(v)g_{4}^{\prime }(v), \\
C &=&1+(f_{4}^{\prime }(u))^{2}+(g_{4}^{\prime }(v))^{2}.
\end{eqnarray*}
\end{corollary}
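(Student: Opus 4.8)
The plan is to obtain the stated expressions by direct substitution of the separable parametrization (\ref{C12}) into Theorem 2 and into the coefficient formulas (\ref{C3})--(\ref{C4}), exploiting the single decisive feature of translation surfaces: the additive structure forces the mixed partial derivatives to vanish.

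First I would record the first-order partials of $f(u,v)=f_{3}(u)+g_{3}(v)$ and $g(u,v)=f_{4}(u)+g_{4}(v)$, namely $f_{u}=f_{3}^{\prime}(u)$, $f_{v}=g_{3}^{\prime}(v)$, $g_{u}=f_{4}^{\prime}(u)$, $g_{v}=g_{4}^{\prime}(v)$. Inserting these into the definitions (\ref{C4}) of $E,F,G$ and (\ref{C3}) of $A,B,C$ reproduces verbatim the six coefficient expressions displayed in the corollary. This step is purely mechanical and involves no simplification beyond copying.

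The crux of the argument is the second-order computation. Since $f$ and $g$ are each a sum of a function of $u$ alone and a function of $v$ alone, the cross derivatives vanish identically, $f_{uv}=0$ and $g_{uv}=0$, while $f_{uu}=f_{3}^{\prime\prime}(u)$, $f_{vv}=g_{3}^{\prime\prime}(v)$, $g_{uu}=f_{4}^{\prime\prime}(u)$, $g_{vv}=g_{4}^{\prime\prime}(v)$. This is the only substantive observation in the proof, and it is exactly what collapses the general Aminov formulas to the simple separated form.

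Finally I would substitute these second derivatives into (\ref{C8}): every term carrying $f_{uv}^{2}$, $g_{uv}^{2}$, or $f_{uv}g_{uv}$ drops out, leaving precisely the claimed numerator $C\,f_{3}^{\prime\prime}g_{3}^{\prime\prime}-B(f_{3}^{\prime\prime}g_{4}^{\prime\prime}+f_{4}^{\prime\prime}g_{3}^{\prime\prime})+A\,f_{4}^{\prime\prime}g_{4}^{\prime\prime}$. For the Gaussian torsion I substitute into (\ref{C9}): the first bracket ($E$-term) and the third bracket ($G$-term) each contain a factor $f_{uv}$ or $g_{uv}$ and hence vanish, so only the $-F$ term survives, yielding $K_{N}=F(f_{4}^{\prime\prime}g_{3}^{\prime\prime}-f_{3}^{\prime\prime}g_{4}^{\prime\prime})/W^{4}$ after a sign rearrangement. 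There is no genuine obstacle: the entire content of the corollary reduces to the vanishing of the mixed partials, after which the result follows immediately from Theorem 2.
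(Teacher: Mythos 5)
Your proposal is correct and follows exactly the route the paper intends: the corollary is stated there without proof as an immediate consequence of Theorem 2, obtained by substituting the separable parametrization into (\ref{C3}), (\ref{C4}), (\ref{C8}) and (\ref{C9}) and using the vanishing of the mixed partials $f_{uv}=g_{uv}=0$. All of your substitutions check out, including the sign rearrangement giving $K_{N}=F(f_{4}^{\prime\prime}g_{3}^{\prime\prime}-f_{3}^{\prime\prime}g_{4}^{\prime\prime})/W^{4}$.
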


\begin{corollary}
Let $M$ be a translation surface given with the Monge patch (\ref{C12}).
Then the mean curvature vector of $M$ becomes%
\begin{equation*}
\overrightarrow{H}=\frac{f_{3}^{\prime \prime }(u)G+g_{3}^{\prime \prime
}(v)E}{2\sqrt{A}W^{2}}N_{1}+\frac{G(f_{4}^{\prime \prime }(u)A-f_{3}^{\prime
\prime }(u)B)+E(g_{4}^{\prime \prime }(v)A-g_{3}^{\prime \prime }(v)B)}{2%
\sqrt{A}W^{3}}N_{2}.
\end{equation*}
\end{corollary}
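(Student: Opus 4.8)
The plan is to specialize the general mean-curvature formula (\ref{C2}) of Theorem 1 to the translation structure (\ref{C12}); no genuinely new computation beyond differentiation is required, so this is a direct corollary. First I would differentiate the two component functions under the additive splitting. Since $f(u,v)=f_{3}(u)+g_{3}(v)$ depends on $u$ only through $f_{3}$ and on $v$ only through $g_{3}$, one obtains
\begin{equation*}
f_{u}=f_{3}^{\prime }(u),\ \ f_{v}=g_{3}^{\prime }(v),\ \ f_{uu}=f_{3}^{\prime \prime }(u),\ \ f_{vv}=g_{3}^{\prime \prime }(v),\ \ f_{uv}=0,
\end{equation*}
and analogously $g_{u}=f_{4}^{\prime }(u)$, $g_{v}=g_{4}^{\prime }(v)$, $g_{uu}=f_{4}^{\prime \prime }(u)$, $g_{vv}=g_{4}^{\prime \prime }(v)$, $g_{uv}=0$. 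The essential feature is the vanishing of the two mixed partials $f_{uv}=g_{uv}=0$, which is precisely what simplifies the translation case.

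Next I would read off the first fundamental form coefficients from (\ref{C4}) and the auxiliary quantities $A,B,C$ from (\ref{C3}); substituting $f_{u},f_{v},g_{u},g_{v}$ gives exactly the six expressions for $E,F,G$ and $A,B,C$ listed in the statement. Then I would insert the second derivatives into (\ref{C2}). In the $N_{1}$-coefficient the middle term $-2Ff_{uv}$ drops out, leaving $\tfrac{1}{2\sqrt{A}W^{2}}\bigl(Gf_{3}^{\prime \prime }(u)+Eg_{3}^{\prime \prime }(v)\bigr)$. Likewise, in the $N_{2}$-coefficient the term $-2F(-Bf_{uv}+Ag_{uv})$ vanishes, and collecting the surviving $G$- and $E$-terms yields $\tfrac{1}{2\sqrt{A}W^{3}}\bigl(G(Af_{4}^{\prime \prime }(u)-Bf_{3}^{\prime \prime }(u))+E(Ag_{4}^{\prime \prime }(v)-Bg_{3}^{\prime \prime }(v))\bigr)$, which is the claimed formula.

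I expect essentially no obstacle here: once the mixed partials are seen to vanish, the result follows immediately from Theorem 1. The only point requiring care is notational bookkeeping, namely keeping straight that the $u$-derivatives of the third coordinate come from $f_{3}$ while its $v$-derivatives come from $g_{3}$ (and similarly $f_{4},g_{4}$ for the fourth coordinate), so that the substituted terms match the stated grouping. No convergence, regularity, or existence issue arises beyond the standing regularity assumption $W^{2}\neq 0$.
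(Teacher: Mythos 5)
Your proposal is correct and coincides with the paper's own (implicit) argument: the paper states the corollary simply as a consequence of Theorem 1, i.e.\ by substituting the derivatives of the additive splitting (\ref{C12}) into (\ref{C2}), with the vanishing of $f_{uv}$ and $g_{uv}$ doing all the simplification, exactly as you describe. No further comment is needed.
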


\begin{example}
The translation surface given with the surface patch of%
\begin{equation*}
X(u,v)=(u,v,u^{2}+v^{2},u^{2}-v^{2})
\end{equation*}%
has vanishing Gaussian curvature and Gaussian torsion \cite{Am}$.$
\end{example}

\begin{theorem}
\cite{DVVZ} Let $M$ be a translation surface in $\mathbb{E}^{4}$. Then $M$
is minimal if and only if either $M$ is a plane or%
\begin{eqnarray*}
f_{k}(u) &=&\frac{c_{k}}{c_{3}^{2}+c_{4}^{2}}\left( \log \left\vert \cos (%
\sqrt{a}u)\right\vert +cu\right) +e_{k}u, \\
g_{k}(v) &=&\frac{c_{k}}{c_{3}^{2}+c_{4}^{2}}\left( -\log \left\vert \cos (%
\sqrt{b}v)\right\vert +dv\right) +p_{k}v,\text{ }k=3,4,
\end{eqnarray*}%
where $c_{k}$, $e_{k}$, $p_{k}$, $a>0,b>0,c,d$ are real constants.
\end{theorem}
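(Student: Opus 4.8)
The plan is to read the minimality condition directly off the mean curvature vector of a translation surface obtained above (Corollary 3.2). Since $\{N_1,N_2\}$ is an orthonormal normal frame, $\overrightarrow{H}=0$ is equivalent to the simultaneous vanishing of its two scalar coefficients, i.e.
\[
f_3''(u)\,G+g_3''(v)\,E=0,\qquad G\bigl(f_4''A-f_3''B\bigr)+E\bigl(g_4''A-g_3''B\bigr)=0 .
\]
The first simplification, and the one that unlocks the whole argument, is to regroup the second equation as $A(f_4''G+g_4''E)-B(f_3''G+g_3''E)=0$: its second bracket is exactly the left side of the first equation and hence vanishes, while $A\ge 1>0$. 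Thus minimality collapses to the symmetric system
\[
f_k''(u)\,G(v)+g_k''(v)\,E(u)=0,\qquad k=3,4 ,
\]
in which $E$ is a function of $u$ alone and $G$ of $v$ alone.

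For each $k$ I would then separate variables: writing the relation as $f_k''(u)/E(u)=-g_k''(v)/G(v)$, the left side depends only on $u$ and the right only on $v$, so both equal a constant $\lambda_k$, giving
\[
f_k''=\lambda_k E,\qquad g_k''=-\lambda_k G,\qquad k=3,4 .
\]
If $\lambda_3=\lambda_4=0$ then all four functions are affine and $M$ is a plane, which is the first alternative of the statement. In the remaining case set $\alpha=f_3'$, $\beta=f_4'$, so the $u$-equations read $\alpha'=\lambda_3(1+\alpha^2+\beta^2)$ and $\beta'=\lambda_4(1+\alpha^2+\beta^2)$.

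The key geometric observation is that the velocity $(\alpha',\beta')$ is a positive multiple of the fixed vector $(\lambda_3,\lambda_4)$, so the trajectory $u\mapsto(\alpha(u),\beta(u))$ runs along a single straight line in that direction. Introducing the signed coordinate $s$ along this line and writing $1+\alpha^2+\beta^2=R^2+s^2$, where $R^2\ge 1$ accounts for the squared distance from the origin to the line, the coupled system reduces to the single Riccati equation $s'=\rho(R^2+s^2)$ with $\rho=\sqrt{\lambda_3^2+\lambda_4^2}$. Integration gives $s=R\tan(R\rho\,u+\text{const})$, and one more integration produces a term $\log\lvert\cos(\sqrt{a}\,u)\rvert$ with $a=R^2\rho^2$; resolving $\alpha,\beta$ into their components along and across $(\lambda_3,\lambda_4)$ then yields $f_k(u)=\tfrac{c_k}{c_3^2+c_4^2}\bigl(\log\lvert\cos(\sqrt a\,u)\rvert+cu\bigr)+e_ku$ with $c_k$ proportional to $\lambda_k$. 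The $v$-equations have the same form but with $\lambda_k$ replaced by $-\lambda_k$, which flips the sign of the Riccati right-hand side and hence replaces $\log\lvert\cos\rvert$ by $-\log\lvert\cos(\sqrt b\,v)\rvert$, giving the companion formula for $g_k$; the frequencies $a$ and $b$ differ because the two lines sit at different distances from the origin.

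The principal obstacle is precisely this last step: the $u$- and $v$-systems are genuinely coupled nonlinear systems, and the crucial trick is recognizing that $(\alpha',\beta')$ has constant direction, which collapses the planar trajectory onto a line and reduces everything to one scalar equation solvable in closed form. Everything afterward is bookkeeping: the proportionality $f_3'':f_4''=g_3'':g_4''=\lambda_3:\lambda_4$ forces the curved parts of the two pairs to share the same profile and frequency, which is exactly what permits the answer to be written with the common normalizing factor $c_k/(c_3^2+c_4^2)$ and the separate affine terms $e_ku$ and $p_kv$.
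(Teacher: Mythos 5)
Your argument is essentially correct, but note that the paper itself offers no proof of this statement at all: it is quoted verbatim from Dillen--Verstraelen--Vrancken--Zafindratafa \cite{DVVZ}, so there is nothing internal to compare against and your derivation is a genuine supplement. Your route is the natural one and it works: reading off $\overrightarrow{H}=0$ from Corollary 5.2, the regrouping $G(Af_4''-Bf_3'')+E(Ag_4''-Bg_3'')=A(Gf_4''+Eg_4'')-B(Gf_3''+Eg_3'')$ together with the first equation and $A\geq 1$ correctly symmetrizes the system to $f_k''G+g_k''E=0$ for $k=3,4$; separation of variables gives $f_k''=\lambda_kE$, $g_k''=-\lambda_kG$; and the observation that $(f_3'',f_4'')$ always points along the fixed direction $(\lambda_3,\lambda_4)$ legitimately collapses the coupled system to the scalar Riccati equation $s'=\rho(R^2+s^2)$, whose solution $s=R\tan(R\rho u+\mathrm{const})$ integrates to the $\log|\cos|$ profile. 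Two small points deserve attention if you write this up. First, your integration produces $\cos(\sqrt{a}\,u+C)$ with a phase that must be absorbed by a translation in $u$ (and similarly in $v$) to reach the stated normal form. Second, you only prove the ``only if'' direction; the converse is not automatic for \emph{arbitrary} constants $c_k,e_k,p_k,a,b,c,d$ --- substituting the formulas back into $f_k''=\lambda_kE$ forces compatibility relations such as $c=-(c_3e_3+c_4e_4)$ and $a=(c_3^2+c_4^2)\bigl(1+\sum_k(e_k+c_kc/(c_3^2+c_4^2))^2\bigr)$. That imprecision is inherited from the theorem as quoted, but a complete ``if and only if'' proof should either state these constraints or verify that the reversal of your steps pins them down.
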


\section{Aminov Surfaces in $\mathbb{E}^{4}$}

In the present section we consider the surfaces $M$ with 
\begin{equation}
f(u,v)=r(u)\cos v,\text{ }g\text{ }(u,v)=r(u)\sin v.  \label{D1}
\end{equation}%
which earlier were been considering in the work \cite{Am2}. We call such
surfaces \textit{Aminov surfaces} in Euclidean 4-space $\mathbb{E}^{4}.$

As a consequence of \ Theorem 2 we get the following result.

\begin{corollary}
Let $M$ be an Aminov surface given with the Monge patch (\ref{D1}). Then the
Gaussian curvature $K$ and Gaussian torsion $K_{N}$ of $M$ becomes%
\begin{equation}
K=-\frac{r(u)r^{\prime \prime }(u)(1+r^{2}(u))+(r^{\prime
}(u))^{2}(1+(r^{\prime }(u))^{2})}{(1+r^{2}(u))^{2}(1+(r^{\prime
}(u))^{2})^{2}},  \label{D2}
\end{equation}%
and%
\begin{equation}
K_{N}=\frac{r^{\prime }(u)r^{\prime \prime }(u)(1+r^{2}(u))+r(u)r^{\prime
}(u)(1+(r^{\prime }(u))^{2})}{(1+r^{2}(u))^{2}(1+(r^{\prime }(u))^{2})^{2}}
\label{D3}
\end{equation}%
respectively.
\end{corollary}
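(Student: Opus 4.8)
The plan is to apply Theorem 2, which already gives the general formulas (\ref{C8}) and (\ref{C9}) for $K$ and $K_N$ of any Monge patch, and specialize them to the Aminov parametrization (\ref{D1}). So the proof is a direct substitution followed by algebraic simplification, with no new conceptual ingredient; the only work is to compute the relevant partial derivatives and then collect terms.

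First I would record all the first- and second-order partial derivatives of $f(u,v)=r(u)\cos v$ and $g(u,v)=r(u)\sin v$. Explicitly, $f_u=r'\cos v$, $f_v=-r\sin v$, $g_u=r'\sin v$, $g_v=r\cos v$, and then $f_{uu}=r''\cos v$, $f_{uv}=-r'\sin v$, $f_{vv}=-r\cos v$, $g_{uu}=r''\sin v$, $g_{uv}=r'\cos v$, $g_{vv}=-r\sin v$. With these in hand I would compute the auxiliary quantities from (\ref{C3}): one finds $A=1+(f_u)^2+(f_v)^2=1+(r')^2\cos^2 v+r^2\sin^2 v$ and $C=1+(g_u)^2+(g_v)^2=1+(r')^2\sin^2 v+r^2\cos^2 v$, together with $B=f_ug_u+f_vg_v=\big((r')^2-r^2\big)\sin v\cos v$. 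A key simplification here is that $W^2=EG-F^2=AC-B^2$ collapses to $(1+r^2)(1+(r')^2)$, since the cross terms in $AC$ cancel against $B^2$; verifying this clean factorization is the one step worth checking carefully, because the denominators in (\ref{D2}) and (\ref{D3}) are exactly $W^4=(1+r^2)^2(1+(r')^2)^2$.

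Next I would substitute these derivatives into the numerator of (\ref{C8}). The three bracketed combinations simplify via the Pythagorean identity: $f_{uu}f_{vv}-f_{uv}^2=-rr''\cos^2 v-(r')^2\sin^2 v$, the mixed term $f_{uu}g_{vv}+g_{uu}f_{vv}-2f_{uv}g_{uv}$ reduces to an expression proportional to $\sin v\cos v$, and $g_{uu}g_{vv}-g_{uv}^2=-rr''\sin^2 v-(r')^2\cos^2 v$. Pairing each of these against the corresponding coefficient $C$, $B$, $A$ and summing, all the $v$-dependence organizes so that the $\cos^2 v$, $\sin^2 v$, and $\sin v\cos v$ pieces combine into the $v$-free numerator $-\big(rr''(1+r^2)+(r')^2(1+(r')^2)\big)$, which gives (\ref{D2}). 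The same mechanical substitution into (\ref{C9}), using the first-fundamental-form coefficients $E,F,G$ computed from (\ref{C4}), yields the numerator of (\ref{D3}) after the analogous trigonometric collapse.

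The main obstacle is purely bookkeeping: keeping track of the many $\cos v$, $\sin v$ factors so that they cancel correctly, and trusting that the final answer is independent of $v$ (which it must be, since the Aminov surface is a surface of revolution type and $K$, $K_N$ should depend only on the profile $r(u)$). I would guard against sign errors by checking the $v$-independence at the end as a consistency test, and by confirming that the common denominator reduces to $W^4$ as claimed. Once the numerators match the stated expressions, the corollary follows immediately from Theorem 2.
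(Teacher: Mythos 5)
Your proposal is correct and follows exactly the route the paper intends: the corollary is stated there as an immediate consequence of Theorem 2, obtained by substituting the partial derivatives of $f=r(u)\cos v$, $g=r(u)\sin v$ into (\ref{C8}) and (\ref{C9}) and simplifying, with $W^2=(1+r^2)(1+(r')^2)$. Your derivative table, the factorization of $AC-B^2$, and the trigonometric collapse of the numerators all check out, so nothing is missing.
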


\begin{proposition}
Let $M$ be an Aminov surface given with the Monge patch (\ref{D1}). If $%
K+K_{N}=0,$ then the equality%
\begin{equation}
(r(u)-(r^{\prime }(u))\left( (r^{\prime }(u)(1+(r^{\prime
}(u))^{2})-r^{\prime \prime }(u)(1+r^{2}(u))\right) =0  \label{D6}
\end{equation}%
holds.
\end{proposition}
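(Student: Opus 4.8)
The plan is to argue directly from the closed-form expressions (\ref{D2}) and (\ref{D3}) for $K$ and $K_N$ supplied by the preceding Corollary, so that all of the geometry of the Aminov parametrization (\ref{D1}) has already been absorbed and only an algebraic manipulation in $r$, $r'$, $r''$ remains (throughout I suppress the argument $u$). The decisive observation is that those two curvatures are displayed over the \emph{same} denominator $(1+r^2)^2(1+(r')^2)^2$, which is a product of squares of quantities bounded below by $1$ and hence strictly positive for every smooth $r$. Consequently the hypothesis $K+K_N=0$ is equivalent to the vanishing of the single combined numerator, and no further differential-geometric input is needed.

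First I would add the numerators of $K$ and $K_N$. Writing the numerator of $K$ as $-\bigl[r r''(1+r^2)+(r')^2(1+(r')^2)\bigr]$ and that of $K_N$ as $r' r''(1+r^2)+r r'(1+(r')^2)$, I would then collect the two terms carrying the factor $r''(1+r^2)$ and, separately, the two terms carrying the factor $1+(r')^2$. This regrouping presents the combined numerator as $r''(1+r^2)(r'-r)+r'(1+(r')^2)(r-r')$.

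The key step is to recognize that, after a single sign adjustment in the first summand, both grouped terms share the common factor $(r-r')$; extracting it yields the combined numerator in the form $(r-r')\bigl[r'(1+(r')^2)-r''(1+r^2)\bigr]$. Since the denominator never vanishes, $K+K_N=0$ forces this product to be zero, which is exactly the claimed identity (\ref{D6}). The only real obstacle is bookkeeping: one must track the signs carefully through the regrouping so that the common factor $(r-r')$ is correctly isolated rather than obscured.
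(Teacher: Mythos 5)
Your argument is correct and is exactly the paper's approach: the paper's proof simply says ``Using (\ref{D2}) and (\ref{D3}) we get the result,'' and your regrouping of the combined numerator into $(r-r')\bigl[r'(1+(r')^2)-r''(1+r^2)\bigr]$ supplies precisely the algebra that the paper leaves implicit. The factorization and the sign bookkeeping both check out.
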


\begin{proof}
Using (\ref{D2}) and (\ref{D3}) we get the result.
\end{proof}

As a consequence of Proposition 8 we can give the following example.

\begin{example}
The Aminov surface given with the surface patch of%
\begin{equation}
X(u,v)=(u,v,\lambda e^{u}\cos v,\lambda e^{u}\sin v).  \label{D7}
\end{equation}%
satisfies the relation $K+K_{N}=0.$
\end{example}

As a consequence of Theorem 1 we get the following results.

\begin{proposition}
Let $M$ be an Aminov surface given with the Monge patch (\ref{D1}). Then the
mean curvature vector of $M$ becomes%
\begin{equation}
\overrightarrow{H}=\frac{\left( Gr^{\prime \prime }(u)-Er(u)\right) }{2W^{2}%
\sqrt{A}}\left\{ \cos vN_{1}+\left( \frac{A\sin v-B\cos v}{W}\right)
N_{2}\right\} .  \label{D8}
\end{equation}%
where%
\begin{eqnarray*}
A &=&1+(r^{\prime }(u))^{2}\cos ^{2}v+r^{2}(u)\sin ^{2}v, \\
B &=&\left( (r^{\prime }(u))^{2}-r^{2}(u)\right) \cos v\sin v, \\
C &=&1+(r^{\prime }(u))^{2}\sin ^{2}v+r^{2}(u)\cos ^{2}v.
\end{eqnarray*}%
and%
\begin{eqnarray*}
E &=&1+(r^{\prime }(u))^{2}, \\
F &=&0, \\
G &=&1+r^{2}(u).
\end{eqnarray*}%
such that $EG-F^{2}=AC-B^{2}.$
\end{proposition}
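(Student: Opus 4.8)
The plan is to specialize the general mean curvature formula (\ref{C2}) of Theorem 1 to the Aminov parametrization $f(u,v)=r(u)\cos v$, $g(u,v)=r(u)\sin v$. First I would record the first-order partial derivatives
\[
f_u=r'\cos v,\quad f_v=-r\sin v,\quad g_u=r'\sin v,\quad g_v=r\cos v,
\]
together with the second-order partials
\[
f_{uu}=r''\cos v,\quad f_{uv}=-r'\sin v,\quad f_{vv}=-r\cos v,
\]
\[
g_{uu}=r''\sin v,\quad g_{uv}=r'\cos v,\quad g_{vv}=-r\sin v.
\]
Substituting the first derivatives into the definitions (\ref{C3}) and (\ref{C4}) yields the stated forms of $A,B,C,E,F,G$; the decisive simplification is that $F=f_uf_v+g_ug_v=0$, which annihilates every term in (\ref{C2}) carrying the factor $-2F$.

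With $F=0$ the $N_1$-coefficient in (\ref{C2}) collapses to $Gf_{uu}+Ef_{vv}=Gr''\cos v-Er\cos v=(Gr''-Er)\cos v$, so the first summand of $\overrightarrow{H}$ is $\tfrac{Gr''-Er}{2W^2\sqrt{A}}\cos v\,N_1$. For the $N_2$-coefficient I would evaluate the two surviving combinations separately, obtaining $-Bf_{uu}+Ag_{uu}=r''(A\sin v-B\cos v)$ and $-Bf_{vv}+Ag_{vv}=-r(A\sin v-B\cos v)$. Since $A\sin v-B\cos v$ is common to both, the $N_2$-numerator factors as $(A\sin v-B\cos v)(Gr''-Er)$, giving the second summand $\tfrac{Gr''-Er}{2W^2\sqrt{A}}\cdot\tfrac{A\sin v-B\cos v}{W}\,N_2$.

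Collecting the two summands and extracting the shared prefactor $\tfrac{Gr''-Er}{2W^2\sqrt{A}}$ reproduces (\ref{D8}) exactly. The only step requiring care is recognizing that the same linear combination $Gr''-Er$ emerges from both coordinate functions, and that inside the $N_2$ slot the identical angular factor $A\sin v-B\cos v$ is shared by the $u$- and $v$-contributions; this rests on the identities $-Bf_{vv}+Ag_{vv}=-r(A\sin v-B\cos v)$ and $-Bf_{uu}+Ag_{uu}=r''(A\sin v-B\cos v)$, which follow directly from $f_{vv}=-r\cos v$, $g_{vv}=-r\sin v$ and $f_{uu}=r''\cos v$, $g_{uu}=r''\sin v$. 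Once these are in hand the whole expression telescopes into the compact factored form and no further difficulty arises.
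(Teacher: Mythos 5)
Your computation is correct and follows exactly the route the paper intends: the proposition is stated there without a written proof, merely as ``a consequence of Theorem 1,'' and your specialization of formula (\ref{C2}) to $f=r\cos v$, $g=r\sin v$ — noting $F=0$ and factoring out $Gr''-Er$ and $A\sin v-B\cos v$ — is precisely that consequence, carried out correctly.
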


\begin{corollary}
Let $M$ be an Aminov surface given with the Monge patch (\ref{D1}). Then the
mean curvature of $M$ becomes%
\begin{equation}
H=\frac{r^{\prime \prime }(u)(1+r^{2}(u))-r(u)(1+(r^{\prime }(u))^{2})}{%
2(1+r^{2}(u))(1+(r^{\prime }(u))^{2})^{3/2}}  \label{D9}
\end{equation}
\end{corollary}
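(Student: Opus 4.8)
The plan is to obtain $H$ as the length of the mean curvature vector $\overrightarrow{H}$ furnished by the preceding proposition, equation (\ref{D8}). Since $\{N_{1},N_{2}\}$ is an orthonormal normal frame, writing $\overrightarrow{H}=H_{1}N_{1}+H_{2}N_{2}$ gives $H=\left\Vert \overrightarrow{H}\right\Vert =\sqrt{H_{1}^{2}+H_{2}^{2}}$, where from (\ref{D8})
\[
H_{1}=\frac{(Gr^{\prime \prime }-Er)\cos v}{2W^{2}\sqrt{A}},\qquad H_{2}=\frac{(Gr^{\prime \prime }-Er)(A\sin v-B\cos v)}{2W^{3}\sqrt{A}}.
\]
Thus the whole computation reduces to simplifying $\cos ^{2}v+\dfrac{(A\sin v-B\cos v)^{2}}{W^{2}}$ and checking that the common prefactor collapses to a function of $u$ alone.

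The engine of the proof is two trigonometric identities, verified directly from the explicit expressions for $A,B,E,G$ recorded in that proposition. First, inserting $A=1+(r^{\prime })^{2}\cos ^{2}v+r^{2}\sin ^{2}v$ and $B=((r^{\prime })^{2}-r^{2})\cos v\sin v$, the terms in $(r^{\prime })^{2}\cos ^{2}v\sin v$ cancel and the Pythagorean identity yields $A\sin v-B\cos v=(1+r^{2})\sin v=G\sin v$. Second, a similar substitution gives $E\cos ^{2}v+G\sin ^{2}v=1+(r^{\prime })^{2}\cos ^{2}v+r^{2}\sin ^{2}v=A$. These two identities are precisely what make the angular variable disappear.

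With them in hand I would use $F=0$, so that $W^{2}=EG$, to rewrite $\dfrac{G^{2}\sin ^{2}v}{W^{2}}=\dfrac{G}{E}\sin ^{2}v$, whence
\[
\cos ^{2}v+\frac{(A\sin v-B\cos v)^{2}}{W^{2}}=\frac{E\cos ^{2}v+G\sin ^{2}v}{E}=\frac{A}{E}.
\]
The factor $A$ then cancels the $A$ hidden in the $\sqrt{A}$ of the prefactor, giving $H^{2}=\dfrac{(Gr^{\prime \prime }-Er)^{2}}{4W^{4}A}\cdot \dfrac{A}{E}=\dfrac{(Gr^{\prime \prime }-Er)^{2}}{4E^{3}G^{2}}$, and hence $H=\dfrac{Gr^{\prime \prime }-Er}{2E^{3/2}G}$. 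Substituting $E=1+(r^{\prime })^{2}$, $G=1+r^{2}$ and $Gr^{\prime \prime }-Er=r^{\prime \prime }(1+r^{2})-r(1+(r^{\prime })^{2})$ reproduces (\ref{D9}).

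The main obstacle is purely algebraic: spotting the two identities $A\sin v-B\cos v=G\sin v$ and $E\cos ^{2}v+G\sin ^{2}v=A$, which are responsible for the striking cancellation of all $v$-dependence; everything else is bookkeeping. One minor point is that the stated formula omits the absolute value, so strictly one recovers $\pm$ the right-hand side of (\ref{D9}); this is the usual sign convention for the scalar mean curvature of a surface in $\mathbb{E}^{4}$ and causes no trouble.
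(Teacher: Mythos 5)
Your proposal is correct and follows exactly the route the paper intends: the corollary is stated without proof as a consequence of the preceding proposition, and the only content is computing $\left\Vert \overrightarrow{H}\right\Vert$ from (\ref{D8}). Your two identities $A\sin v-B\cos v=G\sin v$ and $E\cos ^{2}v+G\sin ^{2}v=A$ check out and correctly account for the disappearance of the $v$-dependence, and your remark about the sign/absolute value is a fair observation about the paper's convention.
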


\begin{corollary}
Let $M$ be an Aminov surface given with the Monge patch (\ref{D1}). If $M$
is minimal then 
\begin{equation}
r(u)=\frac{1}{2a}\left( a^{2}e^{\pm \frac{2(u+b)}{a}}+a^{2}-1\right) e^{\pm 
\frac{(u+b)}{a}},  \label{D10}
\end{equation}%
where, $a$ and $b$ are real constants.
\end{corollary}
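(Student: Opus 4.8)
The plan is to read minimality as a second-order ODE for $r$ and then integrate it twice. First, $M$ is minimal exactly when $\overrightarrow{H}=0$, equivalently when the scalar mean curvature $H$ in (\ref{D9}) vanishes identically. Since the denominator of (\ref{D9}) is positive, this forces the numerator to vanish, giving the autonomous ODE
\begin{equation*}
(1+r^2(u))\,r''(u)=(1+(r'(u))^2)\,r(u).
\end{equation*}
(One may reach the same equation from (\ref{D8}): the bracketed normal combination $\cos v\,N_1+\frac{A\sin v-B\cos v}{W}N_2$ is nowhere zero because $N_1,N_2$ are orthonormal and $A\ge 1$ rules out $\cos v=\sin v=0$, so $\overrightarrow H=0$ forces the scalar factor $Gr''-Er=(1+r^2)r''-(1+(r')^2)r$ to vanish.) The structural feature I would exploit is that this ODE contains no explicit $u$.

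Next I would reduce the order. Writing $p=r'$ and regarding $p$ as a function of $r$, so that $r''=p\,\tfrac{dp}{dr}$, the equation separates as
\begin{equation*}
\frac{p\,dp}{1+p^2}=\frac{r\,dr}{1+r^2}.
\end{equation*}
Integrating both sides gives $\tfrac12\ln(1+p^2)=\tfrac12\ln(1+r^2)+\mathrm{const}$, i.e. the first integral
\begin{equation*}
1+(r'(u))^2=\frac{1}{a^2}\bigl(1+r^2(u)\bigr),
\end{equation*}
where I name the positive integration constant $1/a^2$ with a view to matching (\ref{D10}).

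The first integral is itself separable. Solving for $r'$ gives
\begin{equation*}
\frac{dr}{\sqrt{r^2+1-a^2}}=\pm\frac{du}{a},
\end{equation*}
and integrating the left-hand side to $\ln\!\bigl(r+\sqrt{r^2+1-a^2}\bigr)$ yields, after exponentiating and absorbing the second integration constant into a shift $u\mapsto u+b$,
\begin{equation*}
r+\sqrt{r^2+1-a^2}=a\,e^{\pm\frac{u+b}{a}}.
\end{equation*}
I would then isolate $r$ by moving $r$ to the right, squaring to clear the radical, and solving the resulting \emph{linear} equation $2a\,e^{\pm(u+b)/a}\,r=a^2e^{\pm 2(u+b)/a}+a^2-1$ for $r$, which reproduces the closed form (\ref{D10}).

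The two integrations are routine; the genuine work lies at the two ends. The first delicate point is justifying that $\overrightarrow H=0$ collapses to the single scalar condition $Gr''-Er=0$ (handled by the orthonormality remark above), together with noting the degenerate branch $r\equiv 0$, for which $M$ is a plane and which the stated formula does not cover. The main obstacle, however, is the final algebra: the squaring step must be carried out so that the $\pm$ sign choices are tracked consistently between the inner and outer exponentials, and the two constants $a$ and $b$ identified correctly, so as to land on precisely the expression in (\ref{D10}) rather than on an equivalent-looking but differently normalized solution of the same family.
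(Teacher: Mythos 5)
Your route is correct and in fact proves more than the paper's own argument does. The paper also reduces minimality via (\ref{D9}) to the ODE (\ref{D11}), but then simply asserts that ``by the use of Maple'' the function (\ref{D10}) is a non-trivial solution of (\ref{D11}); that only verifies that the displayed $r$ yields a minimal surface, not the stated implication that every minimal Aminov surface has $r$ of this form. Your reduction of order ($p=r'$ viewed as a function of $r$, the first integral $1+(r')^{2}=\frac{1}{a^{2}}(1+r^{2})$, then a second separation) actually integrates the ODE and produces the general non-planar solution, which is what the ``if $M$ is minimal then\dots'' statement requires; you also correctly isolate the degenerate branch $r\equiv 0$, which the paper ignores. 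One concrete correction to your final algebraic step, however: solving $2ae^{\pm(u+b)/a}\,r=a^{2}e^{\pm 2(u+b)/a}+a^{2}-1$ for $r$ gives
\begin{equation*}
r(u)=\frac{1}{2a}\left(a^{2}e^{\pm\frac{2(u+b)}{a}}+a^{2}-1\right)e^{\mp\frac{(u+b)}{a}}
=\frac{a}{2}\,e^{\pm\frac{u+b}{a}}+\frac{a^{2}-1}{2a}\,e^{\mp\frac{u+b}{a}},
\end{equation*}
so the outer exponential must carry the sign \emph{opposite} to the inner one. Read with concordant signs, (\ref{D10}) does not solve (\ref{D11}): at $a=1$ it gives $r=\tfrac{1}{2}e^{3(u+b)}$, for which $r''(1+r^{2})-r(1+(r')^{2})=4e^{3(u+b)}\neq 0$, whereas the opposite-sign reading gives $r=\tfrac{1}{2}e^{u+b}$, which does satisfy (\ref{D11}) (it is the minimal example $r=\lambda e^{u}$ of (\ref{D7})). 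So your derivation lands on the correct answer, but your claim that it ``reproduces the closed form (\ref{D10})'' is true only if the two $\pm$ signs in (\ref{D10}) are read as opposite rather than concordant — precisely the sign-tracking issue you flag at the end but leave unresolved.
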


\begin{proof}
Suppose that $M$ is minimal then using the equality (\ref{D9}) we
get
\begin{equation}
r^{\prime \prime }(u)(1+r^{2}(u))-r(u)(1+(r^{\prime }(u))^{2})=0.
\label{D11}
\end{equation}%
Further, by the use of Maple and easy calculation shows that
(\ref{D10})\ is a non-trivial solution of (\ref{D11}).
\end{proof}

\begin{definition}
A surface $M$ is said to be \textit{Wintgen ideal surface} in $\mathbb{E}%
^{4} $ if \ the equality%
\begin{equation}
K+\left\vert K_{N}\right\vert =\left\Vert \overrightarrow{H}\right\Vert ^{2}
\label{D12}
\end{equation}%
holds \cite{W}.
\end{definition}

We obtain the following result.

\begin{theorem}
Let $M$ be an Aminov surface given with the Monge patch (\ref{D1}). If $M$
is \textit{Wintgen ideal surface} then the equality%
\begin{equation}
2r^{\prime \prime }(1+r^{2})(1+(r^{\prime })^{2})(2r^{\prime
}-r)+(1+(r^{\prime })^{2})^{2}(4rr^{\prime }-4(r^{\prime
})^{2}-r^{2})-(r^{\prime \prime })^{2}(1+r^{2})^{2}=0  \label{D13}
\end{equation}%
holds.
\end{theorem}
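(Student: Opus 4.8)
The plan is to derive \eqref{D13} directly from the defining relation \eqref{D12} of a Wintgen ideal surface by substituting the explicit Aminov-surface data. Corollary (the one producing \eqref{D2}--\eqref{D3}) gives $K$ and $K_N$ as rational expressions in $r,r',r''$ with common denominator $(1+r^2)^2(1+(r')^2)^2$, while Corollary \eqref{D9} gives the scalar mean curvature $H$, so that $\|\overrightarrow{H}\|^2=H^2$ has denominator $4(1+r^2)^2(1+(r')^2)^3$. Thus the entire identity \eqref{D12} lives over the common denominator $4(1+r^2)^2(1+(r')^2)^3$, and once I clear it the claim reduces to a polynomial identity in $r,r',r''$, which is exactly \eqref{D13}. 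First I would record all three quantities over this common denominator, which amounts to multiplying the numerators of $K$ and $K_N$ by $4(1+(r')^2)$ and leaving the $H^2$ numerator $\bigl(r''(1+r^2)-r(1+(r')^2)\bigr)^2$ as is.

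The key simplifying observation is that $K+K_N$ factors pleasantly: adding the numerators from \eqref{D2} and \eqref{D3} yields
\begin{equation*}
K+K_N=\frac{(r'-r)\bigl[r''(1+r^2)-r'(1+(r')^2)\bigr]}{(1+r^2)^2(1+(r')^2)^2}.
\end{equation*}
At this point I must resolve the absolute value in \eqref{D12}; taking the branch $|K_N|=K_N$ (the sign consistent with the formula \eqref{D3} on the relevant surface) and cross-multiplying $K+K_N=H^2$ by $4(1+r^2)^2(1+(r')^2)^3$ turns the statement into
\begin{equation*}
4(1+(r')^2)(r'-r)\bigl[r''(1+r^2)-r'(1+(r')^2)\bigr]=\bigl(r''(1+r^2)-r(1+(r')^2)\bigr)^2 .
\end{equation*}

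It then remains to expand both sides and regroup. Writing $P=r''(1+r^2)$ for brevity, the left side contributes a term $2P(1+(r')^2)(2r'-r)$ after collecting the $P(1+(r')^2)$ pieces from the product and from $-2PQ$ on the right, the pure $(1+(r')^2)^2$ pieces collapse to $(1+(r')^2)^2(4rr'-4(r')^2-r^2)$, and the $P^2$ term yields $-(r'')^2(1+r^2)^2$; moving everything to one side reproduces \eqref{D13}. The computation is routine but the bookkeeping is the only real hazard: one must track the asymmetric denominators correctly, commit to the correct sign of $K_N$ inside the absolute value, and group the expanded monomials so that the cross terms coming from $K+K_N$ and from the square $(P-Q)^2$ cancel into the compact factors $(2r'-r)$ and $(4rr'-4(r')^2-r^2)$ appearing in \eqref{D13}.
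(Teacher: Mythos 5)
Your proposal is correct and follows essentially the same route as the paper, whose proof consists precisely of substituting \eqref{D2}, \eqref{D3} and \eqref{D9} into \eqref{D12} and clearing denominators; your factorization $K+K_N=\frac{(r'-r)\left[r''(1+r^2)-r'(1+(r')^2)\right]}{(1+r^2)^2(1+(r')^2)^2}$ and the subsequent expansion against $\bigl(r''(1+r^2)-r(1+(r')^2)\bigr)^2$ reproduce \eqref{D13} exactly. You simply carry out in detail the computation the paper leaves implicit.
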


\begin{proof}
Substituting (\ref{D2}), (\ref{D3}), (\ref{D9}) and (\ref{D12}) into we get (%
\ref{D13}).
\end{proof}

\section{Chen Surfaces in $\mathbb{E}^{4}$}

Let $M$ be a smooth surface in $\mathbb{E}^{4}$ given with the patch $X(u,v)$
: $(u,v)\in D\subset \mathbb{E}^{2}.$ If we chose an orthonormal tangent
frame field $\left\{ X,Y\right\} $ 
\begin{eqnarray}
X &=&\frac{X_{u}}{\sqrt{E}},  \label{E1} \\
Y &=&\frac{\sqrt{E}}{W}\left( X_{v}-\frac{FX_{u}}{E}\right) .  \notag
\end{eqnarray}%
then the coefficients of the second fundamental form are given by%
\begin{eqnarray}
h_{11}^{\alpha } &=&\left\langle h(X,X),N_{\alpha }\right\rangle =\frac{%
c_{11}^{\alpha }}{E},\text{ }1\leq \alpha \leq 2.  \notag \\
h_{12}^{\alpha } &=&\left\langle h(X,Y),N_{\alpha }\right\rangle =\frac{1}{W}%
\left( c_{12}^{\alpha }-\frac{F}{E}c_{11}^{\alpha }\right) ,  \label{E2} \\
h_{22}^{\alpha } &=&\left\langle h(Y,Y),N_{\alpha }\right\rangle =\frac{1}{%
W^{2}}\left( Ec_{22}^{\alpha }-2Fc_{12}^{\alpha }+\frac{F^{2}}{E}%
c_{11}^{\alpha }\right) .  \notag
\end{eqnarray}

Further, the shape operator matrix of the surface $M\subset \mathbb{E}^{4}$
becomes%
\begin{equation*}
A_{N_{\alpha }}=\left( 
\begin{array}{cc}
h_{11}^{\alpha } & h_{12}^{\alpha } \\ 
h_{12}^{\alpha } & h_{22}^{\alpha }%
\end{array}%
\right) .
\end{equation*}

Hence, the mean curvature vector of a regular patch $X(u,v)$ is defined by%
\begin{eqnarray}
\overrightarrow{H} &=&\frac{1}{2}(tr(A_{N_{1}})+tr(A_{N_{2}}))  \label{E3} \\
&=&H_{1}N_{1}+H_{2}N_{2},  \notag
\end{eqnarray}%
where the functions%
\begin{equation}
H_{1}=\frac{1}{2}(h_{11}^{1}+h_{22}^{1}),\text{ }H_{2}=\frac{1}{2}%
(h_{11}^{2}+h_{22}^{2})  \label{E5}
\end{equation}%
are called the \textit{first and second harmonic curvatures \ of }$M$
respectively.

For any arbitrary orthonormal normal frame field $N_{1},N_{2}$ of $M$ \ such
that the vector field $N_{1}$ is parallel to mean curvature vector $%
\overrightarrow{H}.$ In \cite{Ch} B-Y. Chen defined the allied vector field $%
a(\overrightarrow{H})$ of the mean curvature vector field $\overrightarrow{H}
$ by the formula%
\begin{equation}
a(\overrightarrow{H})=\frac{\left\Vert \overrightarrow{H}\right\Vert }{2}%
\left\{ tr(A_{_{N_{1}}}A_{_{N_{2}}})\right\} N_{2}.  \label{E6}
\end{equation}%
In particular,the allied mean curvature vector field of the mean curvature
vector $\overrightarrow{H}$ is a well-defined normal vector field orthogonal
to $\overrightarrow{H}.$ If the allied mean vector $a(\overrightarrow{H})$
vanishes identically, then the surface $M$ is called $A$\textit{-surface} of 
$\mathbb{E}^{4}$. Furthermore, $\mathcal{A}$-surfaces are also called 
\textit{Chen surfaces} \cite{GVV1}. The class of Chen surfaces contains all
minimal and pseudo-umbilical surfaces, and also all surfaces for which $%
dimN_{1}\leq 1,$ in particular all hypersurfaces. These Chen surfaces are
said to be trivial $\mathcal{A}$-surfaces \cite{GVV2}.

\begin{theorem}
Let $M$ be a smooth surface in $\mathbb{E}^{4}$ given with the patch $X(u,v)$
: $(u,v)\in D\subset \mathbb{E}^{2}.$ Then $M$ is a non-trivial Chen
surfaces if and only if 
\begin{equation}
\left(
(h_{11}^{1})^{2}-(h_{11}^{2})^{2}+(h_{22}^{1})^{2}-(h_{22}^{2})^{2}+2(h_{12}^{1})^{2}-2(h_{12}^{2})^{2}\right) H_{1}H_{2}+
\label{E7}
\end{equation}%
\begin{equation*}
+\left(
h_{11}^{1}h_{11}^{2}+h_{22}^{1}h_{22}^{2}+2h_{12}^{1}h_{12}^{2}\right)
(H_{2}^{2}-H_{1}^{2})=0
\end{equation*}%
holds, where $H_{1}$ and $H_{2}$ are the first and second harmonic
curvatures \ of $M$ as defined before.
\end{theorem}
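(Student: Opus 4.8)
The plan is to reduce the Chen condition, which in (\ref{E6}) is formulated using the distinguished normal frame whose first member is parallel to $\overrightarrow{H}$, to a statement about the \emph{arbitrary} orthonormal normal frame $\{N_{1},N_{2}\}$ that appears in the theorem. The allied vector field in (\ref{E6}) vanishes precisely when $\left\Vert \overrightarrow{H}\right\Vert \, tr(A_{\overline{N}_{1}}A_{\overline{N}_{2}})=0$, where $\{\overline{N}_{1},\overline{N}_{2}\}$ denotes the orthonormal normal frame with $\overline{N}_{1}$ parallel to $\overrightarrow{H}$. Since we are after a \emph{non-trivial} Chen surface, I assume $\overrightarrow{H}\neq 0$ and $\dim N_{1}=2$, so that neither the minimal nor the $\dim N_{1}\leq 1$ trivial cases occur; under these hypotheses being a Chen surface is equivalent to $tr(A_{\overline{N}_{1}}A_{\overline{N}_{2}})=0$, and the entire task becomes rewriting this last trace in the arbitrary frame $\{N_{1},N_{2}\}$ using the harmonic curvatures $H_{1},H_{2}$ of (\ref{E5}).

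First I would write $\overrightarrow{H}=H_{1}N_{1}+H_{2}N_{2}$ with $\left\Vert \overrightarrow{H}\right\Vert =\sqrt{H_{1}^{2}+H_{2}^{2}}$ and pass to the rotated orthonormal frame
\[
\overline{N}_{1}=\frac{H_{1}N_{1}+H_{2}N_{2}}{\sqrt{H_{1}^{2}+H_{2}^{2}}},\qquad
\overline{N}_{2}=\frac{-H_{2}N_{1}+H_{1}N_{2}}{\sqrt{H_{1}^{2}+H_{2}^{2}}},
\]
which has $\overline{N}_{1}$ parallel to $\overrightarrow{H}$ and $\overline{N}_{2}$ orthogonal to it. The decisive structural fact is that the shape operator is linear (indeed tensorial) in its normal argument, $A_{aN_{1}+bN_{2}}=aA_{N_{1}}+bA_{N_{2}}$ pointwise, so with the symmetric $2\times 2$ matrices $A_{N_{\alpha}}$ of entries $(\ref{E2})$ one gets
\[
A_{\overline{N}_{1}}=\frac{H_{1}A_{N_{1}}+H_{2}A_{N_{2}}}{\sqrt{H_{1}^{2}+H_{2}^{2}}},\qquad
A_{\overline{N}_{2}}=\frac{-H_{2}A_{N_{1}}+H_{1}A_{N_{2}}}{\sqrt{H_{1}^{2}+H_{2}^{2}}}.
\]

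Next I would multiply these and take the trace. Expanding the product and using the cyclicity $tr(A_{N_{1}}A_{N_{2}})=tr(A_{N_{2}}A_{N_{1}})$ yields
\[
(H_{1}^{2}+H_{2}^{2})\,tr(A_{\overline{N}_{1}}A_{\overline{N}_{2}})=-H_{1}H_{2}\big(tr(A_{N_{1}}^{2})-tr(A_{N_{2}}^{2})\big)+(H_{1}^{2}-H_{2}^{2})\,tr(A_{N_{1}}A_{N_{2}}).
\]
Substituting the explicit entries $tr(A_{N_{\alpha}}^{2})=(h_{11}^{\alpha})^{2}+2(h_{12}^{\alpha})^{2}+(h_{22}^{\alpha})^{2}$ and $tr(A_{N_{1}}A_{N_{2}})=h_{11}^{1}h_{11}^{2}+2h_{12}^{1}h_{12}^{2}+h_{22}^{1}h_{22}^{2}$, and setting the numerator equal to zero, produces exactly (\ref{E7}) up to an overall sign, which is immaterial since the expression is equated to $0$. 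The converse direction is the same identity read backwards: if (\ref{E7}) holds then $tr(A_{\overline{N}_{1}}A_{\overline{N}_{2}})=0$, so the allied vector field (\ref{E6}) vanishes and $M$ is a Chen surface.

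The algebra after the rotation is entirely routine; the one genuinely load-bearing observation is the mismatch of frames, namely that the theorem is phrased in an arbitrary normal frame whereas Chen's definition singles out the frame aligned with $\overrightarrow{H}$, which forces the use of the linearity of the shape operator to transport the trace condition between the two frames. I expect the only delicate bookkeeping to be the signs in the $2\times 2$ rotation and the explicit verification that the non-triviality hypotheses $\overrightarrow{H}\neq 0$ and $\dim N_{1}=2$ are precisely what allow us to divide out $\left\Vert \overrightarrow{H}\right\Vert $ and conclude equivalence with $tr(A_{\overline{N}_{1}}A_{\overline{N}_{2}})=0$ rather than with the weaker condition $\left\Vert \overrightarrow{H}\right\Vert\, tr(A_{\overline{N}_{1}}A_{\overline{N}_{2}})=0$ that also admits the trivial families.
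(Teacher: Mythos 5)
Your proposal is correct and follows essentially the same route as the paper: rotate to the orthonormal normal frame $\{\widetilde{N}_{1},\widetilde{N}_{2}\}$ with $\widetilde{N}_{1}$ parallel to $\overrightarrow{H}$, express the two shape operators in that frame, and expand $tr(A_{\widetilde{N}_{1}}A_{\widetilde{N}_{2}})=0$ to obtain (\ref{E7}); your use of linearity of $A$ in the normal argument is just a tidier packaging of the paper's entry-by-entry computation of the $\widetilde{h}_{ij}^{\alpha}$, and your sign convention for $\widetilde{N}_{2}$ only flips the overall sign of an expression equated to zero. Your explicit remarks on the non-triviality hypotheses ($\overrightarrow{H}\neq 0$, $\dim N_{1}=2$) are a welcome precision that the paper leaves implicit.
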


\begin{proof}
Suppose $M$ is a non-minimal surface in $\mathbb{E}^{4}.$ Then we
can construct another orthonormal normal frame field
\begin{equation}
\widetilde{N}_{1}=\frac{H_{1}N_{1}+H_{2}N_{2}}{\sqrt{H_{1}^{2}+H_{2}^{2}}},%
\text{ }\widetilde{N}_{2}=\frac{H_{2}N_{1}-H_{1}N_{2}}{\sqrt{%
H_{1}^{2}+H_{2}^{2}}},  \label{E8}
\end{equation}%
such that $\widetilde{N}_{1}$ is parallel to $\overrightarrow{H}.$

Furthermore, with respect to this frame we can obtain
\begin{eqnarray}
\widetilde{h}_{11}^{1} &=&\left\langle
h(X,X),\widetilde{N}_{1}\right\rangle
=\frac{H_{1}h_{11}^{1}+H_{2}h_{11}^{2}}{\sqrt{H_{1}^{2}+H_{2}^{2}}},
\notag
\\
\widetilde{h}_{12}^{1} &=&\left\langle
h(X,Y),\widetilde{N}_{1}\right\rangle
=\frac{H_{1}h_{12}^{1}+H_{2}h_{12}^{2}}{\sqrt{H_{1}^{2}+H_{2}^{2}}},
\notag
\\
\widetilde{h}_{22}^{1} &=&\left\langle
h(Y,Y),\widetilde{N}_{1}\right\rangle
=\frac{H_{1}h_{22}^{1}+H_{2}h_{22}^{2}}{\sqrt{H_{1}^{2}+H_{2}^{2}}},
\label{E9} \\
\widetilde{h}_{11}^{2} &=&\left\langle
h(X,X),\widetilde{N}_{2}\right\rangle
=\frac{H_{2}h_{11}^{1}-H_{1}h_{11}^{2}}{\sqrt{H_{1}^{2}+H_{2}^{2}}},
\notag
\\
\widetilde{h}_{12}^{2} &=&\left\langle
h(X,Y),\widetilde{N}_{2}\right\rangle
=\frac{H_{2}h_{12}^{1}-H_{1}h_{12}^{2}}{\sqrt{H_{1}^{2}+H_{2}^{2}}},
\notag
\\
\widetilde{h}_{22}^{2} &=&\left\langle
h(Y,Y),\widetilde{N}_{2}\right\rangle
=\frac{H_{2}h_{22}^{1}-H_{1}h_{22}^{2}}{\sqrt{H_{1}^{2}+H_{2}^{2}}}.
\notag
\end{eqnarray}

So, the shape operator matrices of $M$ with respect to
$\widetilde{N}_{1}$ and $\widetilde{N}_{2}$ become
\begin{equation}
A_{\widetilde{N}_{1}}=\left(
\begin{array}{cc}
\frac{H_{1}h_{11}^{1}+H_{2}h_{11}^{2}}{\sqrt{H_{1}^{2}+H_{2}^{2}}} & \frac{%
H_{1}h_{12}^{1}+H_{2}h_{12}^{2}}{\sqrt{H_{1}^{2}+H_{2}^{2}}} \\
\frac{H_{1}h_{12}^{1}+H_{2}h_{12}^{2}}{\sqrt{H_{1}^{2}+H_{2}^{2}}} & \frac{%
H_{1}h_{22}^{1}+H_{2}h_{22}^{2}}{\sqrt{H_{1}^{2}+H_{2}^{2}}}%
\end{array}%
\right) ,\text{ }A_{\widetilde{N}_{2}}=\left(
\begin{array}{cc}
\frac{H_{2}h_{11}^{1}-H_{1}h_{11}^{2}}{\sqrt{H_{1}^{2}+H_{2}^{2}}} & \frac{%
H_{2}h_{12}^{1}-H_{1}h_{12}^{2}}{\sqrt{H_{1}^{2}+H_{2}^{2}}} \\
\frac{H_{2}h_{12}^{1}-H_{1}h_{12}^{2}}{\sqrt{H_{1}^{2}+H_{2}^{2}}} & \frac{%
H_{2}h_{22}^{1}-H_{1}h_{22}^{2}}{\sqrt{H_{1}^{2}+H_{2}^{2}}}%
\end{array}%
\right) ,  \label{E10}
\end{equation}%
respectively.

Suppose $M$ is a non-trivial Chen surface then by definition $tr\left( A_{%
\widetilde{N}_{1}}A_{\widetilde{N}_{2}}\right) =0.$ So by the use of (\ref%
{E10}) we get the result.

Conversely, if the equality (\ref{E7}) holds then $tr\left( A_{\widetilde{N}%
_{1}}A_{\widetilde{N}_{2}}\right) =0$. So, $M$ is a non-trivial Chen
surface.
\end{proof}

We obtain the following result.

\begin{theorem}
Let $M$ be an Aminov surface in $\mathbb{E}^{4}$ given with the Monge patch (%
\ref{D1}). Then $M$ is a non-trivial Chen surface.
\end{theorem}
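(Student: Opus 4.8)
The plan is to invoke the characterization of non-trivial Chen surfaces established just above, namely equation (\ref{E7}): it suffices to compute the harmonic curvatures $H_1,H_2$ together with the normalized second fundamental form coefficients $h_{ij}^{\alpha}$ for the Aminov patch (\ref{D1}), substitute them into (\ref{E7}), and check that the resulting expression vanishes identically. The decisive simplification is that an Aminov surface has $F=0$ (as recorded in the computation leading to (\ref{D8})), so that $W^{2}=EG$ and the orthonormal tangent frame of (\ref{E1}) reduces to $X=X_{u}/\sqrt{E}$, $Y=X_{v}/\sqrt{G}$. Consequently (\ref{E2}) collapses to $h_{11}^{\alpha}=c_{11}^{\alpha}/E$, $h_{12}^{\alpha}=c_{12}^{\alpha}/W$, $h_{22}^{\alpha}=c_{22}^{\alpha}/G$, which removes all cross terms and keeps the algebra manageable.

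First I would specialize the coefficients (\ref{C7}) to $f=r\cos v$, $g=r\sin v$. The second normal component simplifies dramatically once one uses the two identities $A-((r^{\prime})^{2}-r^{2})\cos^{2}v=G$ and $A+((r^{\prime})^{2}-r^{2})\sin^{2}v=E$, which yield $c_{11}^{2}=Gr^{\prime\prime}\sin v/(W\sqrt{A})$, $c_{12}^{2}=Er^{\prime}\cos v/(W\sqrt{A})$, $c_{22}^{2}=-Gr\sin v/(W\sqrt{A})$, while $c_{11}^{1}=r^{\prime\prime}\cos v/\sqrt{A}$, $c_{12}^{1}=-r^{\prime}\sin v/\sqrt{A}$, $c_{22}^{1}=-r\cos v/\sqrt{A}$. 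Passing to the orthonormal frame and forming $H_{1}=\tfrac{1}{2}(h_{11}^{1}+h_{22}^{1})$, $H_{2}=\tfrac{1}{2}(h_{11}^{2}+h_{22}^{2})$ recovers the mean curvature vector of (\ref{D8}); in particular $H_{1}$ is proportional to $\cos v$ and $H_{2}$ to $\sin v$, each carrying the common factor $P:=Gr^{\prime\prime}-Er$.

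The heart of the argument is an algebraic cancellation in (\ref{E7}). Writing the two bracketed combinations there as
\[
S_{1}=(h_{11}^{1})^{2}-(h_{11}^{2})^{2}+(h_{22}^{1})^{2}-(h_{22}^{2})^{2}+2(h_{12}^{1})^{2}-2(h_{12}^{2})^{2},\qquad S_{2}=h_{11}^{1}h_{11}^{2}+h_{22}^{1}h_{22}^{2}+2h_{12}^{1}h_{12}^{2},
\]
I would show that both factor through the single quantity $R:=G^{2}(r^{\prime\prime})^{2}+(r^{2}-2(r^{\prime})^{2})E^{2}$, namely
\[
S_{1}=\frac{QR}{AE^{3}G^{2}},\qquad S_{2}=\frac{R\cos v\sin v}{WAE^{2}G},\qquad Q:=E\cos^{2}v-G\sin^{2}v.
\]
Since simultaneously $H_{1}H_{2}=P^{2}\cos v\sin v/(4E^{2}GWA)$ and $H_{2}^{2}-H_{1}^{2}=-P^{2}Q/(4E^{3}G^{2}A)$, with the matching scalar factors forced by $W^{2}=EG$, the two products $S_{1}\,H_{1}H_{2}$ and $S_{2}\,(H_{2}^{2}-H_{1}^{2})$ are negatives of one another, so their sum is identically zero. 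Hence (\ref{E7}) holds and $M$ is a Chen surface. The main obstacle is precisely organizing this step so that the common factor $R$ and the pairing of $Q$ with $H_{2}^{2}-H_{1}^{2}$ become visible: once the coefficients are expressed in the normalized frame the cancellation is forced, but handled naively (\ref{E7}) is an unwieldy rational function in $r,r^{\prime},r^{\prime\prime}$ and $v$ whose vanishing is far from transparent.

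Finally, for non-triviality I would observe that a generic Aminov surface is non-minimal, so that the frame (\ref{E8}) used in the characterization is well defined; by (\ref{D9}) the mean curvature vanishes only for the exceptional radial profiles (\ref{D10}) satisfying (\ref{D11}). Moreover the first normal space is two-dimensional, since both the $N_{1}$- and $N_{2}$-components of $h$ are non-zero (the $c^{2}_{ij}$ above do not all vanish). Therefore $M$ falls outside the trivial classes of minimal, pseudo-umbilical, or $\dim N_{1}\le 1$ surfaces, and the verification of (\ref{E7}) identifies it as a non-trivial Chen surface, as claimed.
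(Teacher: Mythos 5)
Your proposal is correct and follows essentially the same route as the paper: specialize the coefficients (\ref{C7}) to the patch (\ref{D1}), pass to the orthonormal frame via (\ref{E2}) (your $c_{ij}^{\alpha}$ and $h_{ij}^{\alpha}$ agree with the paper's (\ref{E11})), and substitute into the characterization (\ref{E7}). The only difference is that you make explicit what the paper compresses into ``substituting (\ref{E11}) into (\ref{E7}) we get the result'' --- namely the factorization of both brackets through $R=G^{2}(r^{\prime\prime})^{2}+(r^{2}-2(r^{\prime})^{2})E^{2}$ and the resulting exact cancellation, which checks out --- and you also address the non-triviality (non-minimality for generic $r$, $\dim N_{1}=2$) that the paper leaves implicit.
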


\begin{proof}
Suppose $M$ is an Aminov surface in $\mathbb{E}^{4}$ given with the
parametrization (\ref{D1}). By the use of (\ref{C7}) with (\ref{E2})
a
simple calculation gives%
\begin{eqnarray}
h_{11}^{1} &=&\frac{r^{\prime \prime }(u)\cos v}{\varphi \psi ^{2}},\text{\ }%
h_{12}^{1}=\frac{-r^{\prime }(u)\sin v}{\varphi \psi \omega },  \notag \\
h_{22}^{1} &=&\frac{-r(u)\cos v}{\varphi \omega ^{2}},\text{ }h_{11}^{2}=%
\frac{\omega r^{\prime \prime }(u)\sin v}{\varphi \psi ^{3}},  \label{E11} \\
h_{12}^{2} &=&\frac{r^{\prime }(u)\cos v}{\varphi \omega ^{2}},\text{ }%
h_{22}^{2}=\frac{-r(u)\sin v}{\varphi \psi \omega },  \notag
\end{eqnarray}%
where $\varphi $, $\psi $ and $\omega $ are differentiable functions
defined by
\begin{eqnarray}
\varphi &=&\sqrt{1+(r^{\prime }(u))^{2}\cos ^{2}v+(r(u))^{2}\sin
^{2}v},
\notag \\
\psi &=&\sqrt{1+(r^{\prime }(u))^{2}},  \label{E12} \\
\omega &=&\sqrt{1+(r(u))^{2}}.  \notag
\end{eqnarray}

Substituting (\ref{E11}) into (\ref{E7}) we get the result.
\end{proof}

\textbf{Acknowledgements.} We thank Prof. Dr. Yuriy Aminov of National
Academy of Sciences of Ukraine for useful comments and for his encouragement
during the study.

\end{document}